\def\theenumi{\arabic{enumi}}
\def\theenumii{\alph{enumii}}
\def\p@enumii{\theenumi.}
\def\theenumiii{\arabic{enumiii}}
\def\p@enumiii{(\theenumi)(\theenumii)}
\def\p@enumiv{\p@enumiii.\theenumiii}
\def\A{\aleph}
\def\z{\bar{z}}
\newtheorem{construction}{Construction}[section]
\newtheorem{theorem}[construction]{Theorem}
\newtheorem{corollary} [construction]{Corollary}
\newtheorem{definition} [construction]{Definition}
\newtheorem{lemma} [construction]{Lemma}
\newtheorem{remark} {Remark}
\newcommand{\Mod}[1]{\ (\mathrm{mod}\ #1)}
\begin{document}
\begin{frontmatter}

\title{The number of $s$-separated $k$-sets in various circles}
\address[UNSL]{Departamento de Matem\'atica, Universidad Nacional del San Luis, San Luis, Argentina}

\author[UNSL]{Emiliano J.J. Estrugo}\ead{juan.estrugo.tag@gmail.com} 
\author[UNSL]{Adri\'{a}n Pastine}\ead{adrian.pastine.tag@gmail.com}

\begin{abstract} This article studies the number of ways of selecting $k$ objects arranged in 
$p$ circles of sizes $n_1,\ldots,n_p$ such that no two selected ones have less than $s$
objects between them. If $n_i\geq sk+1$ for all $1\leq i \leq p$, this number is shown to be
$\frac{n_1+\ldots+n_p}{k}\binom{n_1+\ldots+n_p-sk-1}{k-1}$. A combinatorial proof of this claim
is provided, and some nice combinatorial formulas are derived.
\end{abstract}
\begin{keyword} exact enumeration\sep combinatorial identities \sep $s$-Separation \sep $n$-Circle

\MSC 05A15 \sep 05A19

\end{keyword}

\end{frontmatter}

\section{Introduction}
The study of separated sets in circles and lines has been of great interest in combinatorics throughout the years.
The first important result was in 1943, \cite{Kap}, where Kaplansky proved that the number of ways to choose $k$ 
elements arranged in a circle (resp. a line) of $n$ elements such that no two are consecutive is 
$\frac{n}{n-k}\binom{n-k}{k}$ (resp. $\binom{n-k+1}{k}$).
Later Konvalina \cite{Kon} showed that the number of ways of selecting $k$ elements arranged in a circle of $n$ elements, with no two selected elements having unit separation, i.e. having exactly
one object between them, is $\binom{n-k}{k}+\binom{n-k-1}{k-1}$ if $n\geq 2k+1$.


Afterwards Mansour and Sun \cite{Man} found that the number of ways of selecting $k$ out of $n$ elements arrayed in a 
circle such that no two selected ones are separated by $m-1, 2m-1, \dots , pm-1$ elements is given by $\frac{n}{n-pk}
\binom{n-pk}{k}$, where $m, p \geq 1$ and $n \geq mpk + 1$. New proofs of this result were given in \cite{Chen} 
and \cite{guo1}. In particular \cite{Chen} provides a combinatorial proof, using partitions of $\mathbb{Z}_n$.
The technique using partitions was further explored in \cite{guo2}. 

Recently, in \cite{eldesouky} the problem was generalize to study sets where no two elements are at even distance,
or no two elements are at odd distance.

On the other hand, in 2003, Talbot \cite{Tal} studied $k$ element sets 
in a circle of size $n$, with the additional condition that no two elements can be
at distance lesser than or equal to $s$. The main result in \cite{Tal} is an Erd\"{o}s-Ko-Rado type theorem,
which characterizes the families maximum size of such sets, satisfying that no two 
are disjoint. Nevertheless,
we are interested one of the lemmas from said work, which states that the number these sets
containing a fixed element is $\binom{n-ks-1}{k-1}$.

Our aim is to study the number $k$ element sets in several circles of sizes $n_1,\ldots,n_p$.
In Section \ref{seccion2circulos} we will study the number of $k$ element sets having a fixed element
in two circles of sizes $n_1$ and $n_2$. In Section \ref{seccionvarioscirculoscon1} we generalize
this result to several circles of sizes $n_1,\ldots,n_p$. Finally, in Section \ref{seccionprincipal}
we use the results obtained with a fixed element to count the number of $k$ element sets in several
circles of sizes $n_1,\ldots,n_p$.

\section{$s$-separated $k$-sets in two circles with a fixed element}\label{seccion2circulos}
We begin this section by introducing notation. The set of the first $n$ positive integers is denoted by
$[n]\coloneqq\{1,2,\dots,n\}$.
In order to distinguish between the elements of different circles, we introduce the following.
\begin{definition}
	Let $n_1,n_2,\dots n_p$ be positive integers and denote the set 
	\[
  [n_1,n_2,\dots,n_p] =	\left([n_1] \times \{1\}\right) \cup 	\left([n_2] \times \{2\}\right) \cup \cdots \left([n_p] \times \{p\}\right).    
	\]
\end{definition}

Thus the elements of the $ith$ circle are those whose second coordinate is $i$.

\begin{definition}
	Given a positive integer $s$ we say that a $k$ element subset of $[n_1,\dots,n_p]$ is \textit{$s$-separated} 
	if it does not contain two elements in a circle with less than
	$s$ elements between them. 
\end{definition}

When we are consider the elements of an $s$-separated set, we sometimes say that the elements are $s$-separated.
 

Using the notation we just introduced, the lemma from \cite{Tal} mentioned in 
the introduction can be written as follows.

\begin{lemma}\label{teorematalbot}\cite{Tal}
If $\mathcal{A}_1^{s,k}(n) = \{ A \in [n]_k^s:\: 1\in A \}$ then
\[ \left| \mathcal{A}_1^{s,k}(n) \right| = \binom{n-ks-1}{k-1}. \]
\end{lemma}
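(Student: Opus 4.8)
The plan is to prove this by the standard ``gap encoding'' of a subset of a cyclically arranged set, reducing the count to a stars-and-bars computation. Since every set in $\mathcal{A}_1^{s,k}(n)$ contains the distinguished element $1$, I would first cut the circle open at $1$ and list the elements of such a set $A$ in increasing order as $1 = a_1 < a_2 < \cdots < a_k \le n$. To each such $A$ I associate the tuple of \emph{gaps} $(g_1,\dots,g_k)$ defined by $g_i = a_{i+1}-a_i$ for $1 \le i \le k-1$ and $g_k = n+1-a_k$; here $g_k$ is the ``wrap-around'' gap recording the distance, measured around the circle, from $a_k$ back to $a_1 = 1$. By construction $g_1 + \cdots + g_k = n$ with each $g_i \ge 1$, and $A \mapsto (g_1,\dots,g_k)$ is a bijection between the $k$-subsets of $[n]$ containing $1$ and the compositions of $n$ into $k$ positive parts.

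The next step is to translate the $s$-separation condition into a constraint on the gaps. Between consecutive chosen elements $a_i$ and $a_{i+1}$ there are exactly $a_{i+1}-a_i-1 = g_i - 1$ unchosen elements, and likewise there are $n - a_k = g_k - 1$ unchosen elements strictly between $a_k$ and $1$ going around the circle. Hence $A$ is $s$-separated if and only if $g_i - 1 \ge s$, i.e. $g_i \ge s+1$, for every $i$. Therefore $\left|\mathcal{A}_1^{s,k}(n)\right|$ equals the number of integer solutions of $g_1 + \cdots + g_k = n$ with $g_i \ge s+1$ for all $i$. Substituting $h_i = g_i - (s+1) \ge 0$ turns this into counting the weak compositions of $n - k(s+1) = n - ks - k$ into $k$ nonnegative parts, of which there are $\binom{n-ks-k+k-1}{k-1} = \binom{n-ks-1}{k-1}$, as claimed. (When $n < k(s+1)$ both sides vanish, so the identity still holds under the usual convention on binomial coefficients.)

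I do not expect a serious obstacle; the only point requiring care is the cyclic bookkeeping. One must make sure the wrap-around gap $g_k$ is defined so that it genuinely measures the separation between $a_k$ and $1$ the short way around the circle and is counted exactly once, and that the equivalence ``at least $s$ elements between'' $\iff$ ``gap $\ge s+1$'' is applied uniformly to all $k$ gaps, the wrap-around one included. Checking the degenerate cases — $k=1$, where the single gap equals $n$ and the count is $\binom{n-s-1}{0}=1$, and small $n$ where no $s$-separated $k$-set exists — confirms the formula at the boundary.
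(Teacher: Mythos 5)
Your argument is correct: cutting the circle at the distinguished element $1$, encoding a set by its $k$ cyclic gaps (including the wrap-around gap $g_k=n+1-a_k$), observing that $s$-separation is exactly ``every gap $\ge s+1$'', and then counting compositions of $n$ into $k$ parts of size at least $s+1$ by stars and bars gives $\binom{n-ks-1}{k-1}$, with the degenerate cases handled as you note. Be aware, though, that the paper does not prove this statement at all: it is Lemma \ref{teorematalbot}, quoted from Talbot's paper \cite{Tal} and used as a black box, so there is no ``paper proof'' to compare against. What you have written is the standard elementary gap-counting (Kaplansky-style) proof of Talbot's lemma, and it would serve as a self-contained justification of the cited result; the only point worth stating explicitly is the equivalence between ``at least $s$ elements between any two chosen elements on the circle'' and ``every consecutive gap is at least $s+1$'', which you do address, including the wrap-around gap.
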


On the other hand, the following is a weaker version of the main result from \cite{Man}.
\begin{theorem}\label{teoremamansour}\cite{Man}
\[|[n]_k^s|=\frac{n}{n-sk}\binom{n-sk}{k}\]
\end{theorem}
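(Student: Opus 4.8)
The plan is to derive the count from Lemma \ref{teorematalbot} by a standard double-counting argument that exploits the rotational symmetry of the circle. First I would observe that the property of being $s$-separated is invariant under the cyclic shift $j \mapsto j+1 \pmod{n}$ acting on $[n]$; consequently, for each $i \in [n]$, rotating by $i-1$ gives a bijection between $\mathcal{A}_1^{s,k}(n) = \{A \in [n]_k^s : 1 \in A\}$ and $\{A \in [n]_k^s : i \in A\}$. Hence every element of $[n]$ belongs to exactly $|\mathcal{A}_1^{s,k}(n)| = \binom{n-ks-1}{k-1}$ members of $[n]_k^s$ (here one uses implicitly that $n \geq sk+1$, so that such sets exist and Lemma \ref{teorematalbot} applies).

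Next I would count the pairs $(i,A)$ with $A \in [n]_k^s$ and $i \in A$ in two ways. Summing first over $i \in [n]$ gives $n\binom{n-ks-1}{k-1}$ by the previous paragraph. Summing first over $A \in [n]_k^s$ gives $k\,|[n]_k^s|$, since each such set has exactly $k$ elements. Equating the two expressions yields
\[
|[n]_k^s| = \frac{n}{k}\binom{n-ks-1}{k-1}.
\]

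Finally I would verify that $\frac{n}{k}\binom{n-ks-1}{k-1} = \frac{n}{n-sk}\binom{n-sk}{k}$, which is a routine manipulation of factorials: both sides equal $\dfrac{n\,(n-sk-1)!}{k!\,(n-sk-k)!}$, using $k\,(k-1)! = k!$ on the left-hand side and $(n-sk)(n-sk-1)! = (n-sk)!$ on the right-hand side.

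I do not expect a genuine obstacle here; the only step requiring a little care is justifying the rotational symmetry cleanly, namely that $\mathbb{Z}_n$ acts on $[n]_k^s$ and acts transitively on $[n]$, so that the element-containment counts coincide for all $i$. Everything else is bookkeeping with binomial coefficients. One could avoid even the symmetry remark by combining the line version of the Talbot-type count with a cut-the-circle argument, but the symmetry route is cleaner and uses precisely the lemma already recorded above.
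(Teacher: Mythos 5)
Your proof is correct. Note, however, that the paper does not actually prove this statement: Theorem \ref{teoremamansour} is imported as a citation to Mansour and Sun \cite{Man} (as a weak special case of their result, with the hypothesis $n\geq sk+1$ left implicit), so there is no in-paper argument to compare against. What you have done is derive it independently from Lemma \ref{teorematalbot}, and your method --- count pairs $(i,A)$ with $i\in A\in[n]_k^s$, use rotational symmetry to see that each $i$ lies in exactly $\binom{n-sk-1}{k-1}$ such sets, and divide by $k$ --- is exactly the technique the authors deploy later, in the proof of Theorem \ref{principal}, to pass from the fixed-element count of Corollary \ref{corolariocualquierfijo} to the count of all $s$-separated $k$-sets in several circles. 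In other words, you have rediscovered the paper's main counting strategy in the single-circle case, which shows that Theorem \ref{teoremamansour} need not be taken as an external input at all: it is a corollary of Talbot's lemma by the same double count. Your final algebraic check that
\[
\frac{n}{k}\binom{n-sk-1}{k-1}=\frac{n}{n-sk}\binom{n-sk}{k}
\]
is also right; both sides equal $n\,(n-sk-1)!/\bigl(k!\,(n-sk-k)!\bigr)$. The only point worth making explicit is the one you already flagged: the hypothesis $n\geq sk+1$ is needed both for Lemma \ref{teorematalbot} and to keep the denominator $n-sk$ positive.
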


In this section we establish a bijection between the set $\mathcal{A}_{(1,1)}^{s,k}(n_1,n_2) = 
\{A\in [n_1,n_2]_k^s: \: (1,1)\in A \}$ and the set 
in $\mathcal{A}_1^{s,k}(n_1+n_2) = \{ A \in [n_1+n_2]_k^s:\: 1\in A \}$. Together with Lemma \ref{teorematalbot}
this yields the equality $\mathcal{A}_{(1,1)}^{s,k}(n_1,n_2) = \binom{n_1+n_2-ks-1}{k-1}$. 
 
First, we define a function from $[n_1,n_2]$ to $[n_1+n_2]$ and its inverse. These functions will not be the needed 
bijection as some $s$-separated sets will turn into sets that are not $s$-separated. To obtain the bijection
we will introduce a process that will move the problematic elements (the ones that are not $s$-separated)
in order to obtain $s$-separated sets.

\begin{definition}
Let $f:[n_1,n_2]\rightarrow [n_1+n_2]$ be defined by

\[ 
f(i,j) = \begin{cases}
i & \text{if}\ j=1\\
n_1+i & \text{if}\ j=2.
\end{cases}
\]	
\end{definition}

So $f$ transforms the the two circles $[n_1]\times\{1\},[n_2]\times\{2\}$ into one circle $[n_1+n_2]$ 
by relabeling the elements and making $(1,2)$ come right after $(n_1,1)$ in the cyclic order. 

Next we introduce the inverse of $f$ defined by $g:[n_1+n_2] \rightarrow [n_1,n_2]$

\begin{definition}
Let $g:[n_1+n_2]\rightarrow [n_1,n_2]$ be defined by
	\[ 
	g(i) = \begin{cases}
	(i,1) & \text{if}\ 1 \leq i \leq n_1\\
	(i,2) & \text{if}\ n_1+1 \leq i \leq n_1+n_2.
	\end{cases}
	\]	
\end{definition}

The image of some $s$-separated sets (in their respective circles) under the functions $f$ and $g$ will not be $s$-
separated sets although those sets will remain $k$-sets. Notice that if $A\in\mathcal{A}_{(1,1)}^{s,k}(n_1,n_2)$, then 
$f(A)\in \mathcal{A}_1^{s,k}(n_1+n_2)$ if and only if $A\cap (\{n_2-s+1,\ldots,n_2\}\times\{2\})=\emptyset$.
If $f(A)\not\in \mathcal{A}_1^{s,k}$, then $A\cap (\{n_2-s+1,\ldots,n_2\}\times\{2\})$ consists of a single
element $(n_2-d,2)$. But as $A$ is $s$-separated and $(1,1)\in A$, we have $(n_1-d,1)\not\in A$. 
Then we can switch the element $(n_2-d,2)$ by the element $(n_1-d,1)$, which through $f$ will yield
a set in $\mathcal{A}_1^{s,k}(n_1+n_2)$ unless $A\cap (\{n_1-d-s+1,\ldots,n_1-d\}\times\{1\})=(n_1-d-d',1) \neq \emptyset$. 
But as $A$ is $s$-separated and $(n_2-d,2)\in A$, $(n_2-d-d',2)\not\in A$. Thus
we can switch again and keep going. Notice that in order to do these switches 
we are identifying the elements in the circles
in descending order.
\begin{align*}
(n_1,1)\quad &\longleftrightarrow \; (n_2,2)\\
(n_1-1,1)\quad &\longleftrightarrow \; (n_2-1,2)\\
&\quad \vdots\\
(n_1-s,1)\quad &\longleftrightarrow \; (n_2-s,2)\\
&\quad \vdots\\
(n_1-s(k-1),1)\quad &\longleftrightarrow \; (n_2-s(k-1),2). 
\end{align*}

We are ready now to introduce the processes that yield the bijection.
In the following definitions the first coordinate of an ordered pair is of importance,
thus we use the projection notation: $\pi_1(a,b)=a$.

\begin{definition}[zig]\label{definicionzeta}
 Let $A \in \mathcal{A}_{(1,1)}^{s,k}(n_1,n_2)$,  $n_1 \geq sk+1$, $n_2 \geq sk$, and
 
 \[
   Z_0 = A,\quad a_{-1} = n_1 + 1,\quad z_{-1} = n_2 + 1.
 \]

  For $i=0,1,\dots,k-2$ let
 
 \[
 B_i = \{ z_{i-1} - 1, \dots , z_{i-1}-s \} \times \{ i\Mod{2}\}.
 \]
 
 If $Z_i \cap B_i = \emptyset$ then the \textit{zig} of $A$ is $\mathcal{Z}(A)=Z_i$, 
 the \textit{z-order} of $A$ is $i$ and the process stops; else

 \begin{align*}
 a_i &= \pi_1 (Z_i \cap B_i),\\
 d_i &= z_{i-1}-a_i,\\
 z_i &= a_{i-1}-d_i.
 \end{align*}
 
 \[
 Z_{i+1} = (Z_i - \{ (a_i, i \Mod{2}) \}) \cup \{ (z_i,i+1\Mod{2} \}.
 \]
\end{definition}
 
\begin{remark}\label{Remark1}
Note that $1 \leq d_i \leq s$ and $f(Z_{j-1})\setminus\left(Z_j\cap B_j\right)$ is $s$-separated in $[n_1+n_2]$.
\end{remark}

\begin{definition}[zag]\label{definicionzetatecho}
Let $\aleph \in \mathcal{A}_1^{s,k}(n_1+n_2)$, $n_1 \geq sk+1$, $n_2 \geq sk$, and

\[
\widehat{Z}_0 = g(\aleph),\quad \bar{a}_{-1} = n_2 + 1,\quad \bar{z}_{-1} = n_1 + 1.
\]

For $i=0,1,\dots,k-2$ let

\[
C_i = \{ \bar{z}_{i-1} - 1, \dots , \bar{z}_{i-1}-s \} \times \{ (i+1)\Mod{2}\}.
\]

If $\widehat{Z}_i \cap C_i = \emptyset$ then the \textit{zag} of $\aleph$ is 
$\mathcal{\widehat{Z}}(\aleph)=\widehat{Z}_i$ the \textit{$\widehat{z}$-order} of $\A$ is $i$ and the process stops; else

 \begin{align*}
	\bar{a}_i &= \pi_1 (\widehat{Z}_i \cap C_i),\\
	\bar{d}_i &= \bar{z}_{i-1}-\bar{a}_i,\\
	\bar{z}_i &= \bar{a}_{i-1}-\bar{d}_i.
	\end{align*}

\[
\widehat{Z}_{i+1} = (\widehat{Z}_i - \{ (\bar{a}_i, i+1\Mod{2} \}) \cup \{ (\bar{z}_i, i\Mod{2} \}.
\]
\end{definition}
\begin{remark}\label{Remark2}
Again, note that $1 \leq \bar{d}_i \leq s$ and $\widehat{Z}_{j-1} \setminus \left( \widehat{Z}_j\cap C_j \right)$ is $s$-separated in $[n_1,n_2]$.
\end{remark}

We want $f(\mathcal{Z}(A))$ (resp. $\widehat{\mathcal{Z}}(g(\A))$) to be $s$-separated $k$-sets. 
By Remark \ref{Remark1} if $a_i\geq 1$, then $a_i\leq z_{i-1}$.  This means that the element that
prevents $f(Z_i)$ from being $s$-separated keeps getting smaller. 
Notice that at each step of Definition \ref{definicionzeta}
$|Z_i\cap B_i|\leq 1$. Even more, $\sum_{j=0}^i d_j\leq s(i+1)$, and $a_i=z_{-1}-\sum_{j=0}^id_j$. 
Hence $a_i\leq n_1+1 - s(i+1)$ if $i\equiv 1 \Mod{2}$ or $a_i\leq n_2+1-s(i+1)$
if $i\equiv 0 \Mod{2}$.

Assume the $z$-order of $A$. Then $f(Z_{k-2})$,
and $k-1$ elements in total had to be moved around.
If $k$ is even, then $(z_{k-2},1)\in Z_{k-1}$, and if $k$ is odd then $(z_{k-2},2)$. But 

\begin{align*}
z_{k-2}&\geq n_1+1-s(k-1)\geq s+1&\text{if $k$ is even}\\
z_{k-2}&\geq n_2+1-s(k-1)\geq s &\text{if $k$ is odd}.\\
\end{align*}
Then if $k$ is even, $(1,1)\in Z_{k-2}$ is $s$-separated from $(z_{k-2},1)$. 
On the other hand if $k$ is odd
$(1,2)\not\in A$, as
$A$ is $s$-separated and the $z$-order of $A$ is $k-2$ ($a_0=n_2+1-d_0\in A$). Hence if $k$ is odd, $(z_{k-2},2)$ is 
$s$-separated from the other elements of $Z_{k-2}$ and $(1,1)\in Z_{k-2}$. 
The only elements that are not separated in $Z_{k-2}$ are $(1,1)$ with
$(z_0,1)$. Thus $f(Z_{k-2})$ is $s$-separated, and $(1,1)\in f(Z_{k-2})$.
 
A similar argument shows the same for $\widehat{\mathcal{Z}}(\aleph)$. This has been summarized in the following lemma.

\begin{lemma}\label{lema1fijo}
	Let $A\in \mathcal{A}_{(1,1)}^{s,k}(n_1,n_2)$ and $\aleph\in \mathcal{A}_1^{s,k}(n_1+n_2)$.
	Then $f(\mathcal{Z}(A))$ is $s$-separated in $[n_1+n_2]$, $\widehat{\mathcal{Z}}(\aleph)$ is 
	$s$-separated in $[n_1,n_2]$ and $(1,1)\in \mathcal{Z}(A) \cap \widehat{\mathcal{Z}}(\aleph)$.
\end{lemma}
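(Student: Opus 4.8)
The plan is to verify the two separation claims essentially as the discussion preceding the lemma already indicates, making explicit the two facts that the text isolates: that the "problematic" element strictly descends along the zig (resp. zag) process, and that the total displacement after $i$ steps is bounded by $s(i+1)$. First I would fix $A\in\mathcal{A}_{(1,1)}^{s,k}(n_1,n_2)$ and run the zig process, checking by induction on $i$ the invariants: (i) $1\le d_i\le s$; (ii) $a_i=z_{-1}-\sum_{j=0}^{i}d_j$ when $i$ is even and the analogous identity with $a_{-1}$ when $i$ is odd (i.e., the indices alternate between the two circles, which is exactly the role of the $i\bmod 2$ in the definition); and (iii) at each stage $Z_{i+1}$ is obtained from $Z_i$ by a single swap, so $|Z_{i+1}|=k$ and $|Z_i\cap B_i|\le 1$. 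The key point behind (iii), already recorded in Remark~\ref{Remark1}, is that after the swap the only pair of elements of $f(Z_{i+1})$ that can fail to be $s$-separated is the one created by the newly inserted element $(z_i,\,i+1\bmod 2)$ together with $(1,1)$; all other potential conflicts were removed precisely because $A$ was $s$-separated and we chose $a_i$ as the unique element of $Z_i\cap B_i$.

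Next I would use the accumulated bound $\sum_{j=0}^{i}d_j\le s(i+1)$ together with the hypotheses $n_1\ge sk+1$ and $n_2\ge sk$ to bound the last inserted element. Since the process stops at $z$-order $i\le k-2$, in the worst case $i=k-2$ we get $z_{k-2}\ge n_1+1-s(k-1)\ge s+1$ if $k$ is even and $z_{k-2}\ge n_2+1-s(k-1)\ge s$ if $k$ is odd — these are the two displayed inequalities in the excerpt. In the even case $(1,1)$ and $(z_{k-2},1)$ then have at least $s$ elements between them on the circle $[n_1+n_2]$, so no conflict remains. In the odd case I would argue, as the text does, that $(1,2)\notin A$: the $z$-order being $k-2$ forces $a_0=n_2+1-d_0\in A$, so by $s$-separation of $A$ in the second circle the cells $\{1,\dots,s\}\times\{2\}$ are empty, hence $(z_{k-2},2)$ is $s$-separated from every other element of $Z_{k-2}$ and in particular is not in conflict with $(1,1)\in[n_1]\times\{1\}$. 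If the process stops earlier, at $z$-order $i<k-2$, the same estimates apply verbatim with $i$ in place of $k-2$, and by definition of stopping $Z_i\cap B_i=\emptyset$ so there is no conflict on that side either; thus $f(\mathcal{Z}(A))$ is $s$-separated in $[n_1+n_2]$ and, since the swaps never touch $(1,1)$ and $f(1,1)=1$, we have $1\in f(\mathcal{Z}(A))$, i.e. $(1,1)\in\mathcal{Z}(A)$.

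Finally I would observe that the zag process of Definition~\ref{definicionzetatecho} is literally the mirror image of the zig process: one swaps the roles of $n_1$ and $n_2$, replaces $f$ by $g$, and shifts the parity of the $B_i$'s to the parity of the $C_i$'s, with $\bar a_{-1},\bar z_{-1}$ playing the roles of $a_{-1},z_{-1}$. Consequently the identical induction — invariants $1\le\bar d_i\le s$, $\bar a_i=\bar z_{-1}-\sum\bar d_j$ on the appropriate parity, single-swap at each step (Remark~\ref{Remark2}) — together with the bounds $n_1\ge sk+1$, $n_2\ge sk$ gives that $\widehat{\mathcal{Z}}(\aleph)$ is $s$-separated in $[n_1,n_2]$ and $(1,1)\in\widehat{\mathcal{Z}}(\aleph)$, completing the proof.

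I expect the main obstacle to be nothing computational but rather bookkeeping the alternation of parities cleanly: one must be careful that "the element preventing $s$-separation" after the $i$-th swap genuinely lies in the circle indexed by $i+1\bmod 2$ and in the window $\{\bar z_i-1,\dots,\bar z_i-s\}$ (so that it is caught by $B_{i+1}$ at the next step), and that the inductive identity $a_i=z_{-1}-\sum_{j\le i}d_j$ uses the correct one of $a_{-1},z_{-1}$ according to parity; getting these indices exactly right is the crux, everything else is the two displayed inequalities.
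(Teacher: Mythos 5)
Your proposal is correct and follows essentially the same route as the paper, whose proof simply invokes the discussion preceding the lemma: the descent of the problematic element with $1\le d_i\le s$, the accumulated bound $\sum_{j\le i} d_j\le s(i+1)$ giving the two displayed inequalities for $z_{k-2}$, the parity case analysis (including $(1,2)\notin A$ when $k$ is odd), and the mirror argument for the zag. Your added bookkeeping of the inductive invariants only makes explicit what the paper leaves implicit.
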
	

\begin{proof}
The result follows from the discussion preceding the lemma.
\end{proof}

Working from Definition \ref{definicionzeta}, $(a_i,i\Mod{2})\in A$ for every $i$
and $(z_i,i+1\Mod{2})=(a_{i-1}-d_i,i+1\Mod{2})\not\in A$ because $A$ is $s$-separated and $d_i\leq s$.
Notice that $Z_i$ is obtained from $Z_{i-1}$ by deleting $(a_i,i\Mod{2})$ 
and adding $(z_i,i+1\Mod{2})=(a_{i-1}-d_i,i+1\Mod{2})$. So at each step an element the belonged to $A$
is taken out, and an element that was not in $A$ is added. Hence the number of elements remains unchanged 
and $|Z_i|=|A|=k$.
This works in a similar fashion for $\widehat{Z}_i$. We then obtain the following.

\begin{lemma}\label{lemaksets}
	Let $A\in \mathcal{A}_{(1,1)}^{s,k}$ and $\aleph \in \mathcal{A}_1^{s,k}$. The sets $Z_i(A)$ and 
	$\widehat{Z}(\A)$ have $k$-elements for every $i$. 
\end{lemma}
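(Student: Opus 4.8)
The plan is to prove Lemma \ref{lemaksets} by a straightforward induction on the index $i$, exploiting the fact that each step of the \emph{zig} (resp.\ \emph{zag}) process swaps exactly one element out for exactly one element in. The discussion preceding the lemma already contains the essential observation; the proof just needs to regiment it. First I would set up the base case: by definition $Z_0 = A$, so $|Z_0| = |A| = k$ because $A \in \mathcal{A}_{(1,1)}^{s,k}(n_1,n_2)$ is a $k$-set by hypothesis. Similarly $\widehat{Z}_0 = g(\aleph)$, and since $g$ is injective and $\aleph$ is a $k$-set, $|\widehat{Z}_0| = k$.

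For the inductive step, suppose $|Z_i| = k$ and that the process has not yet stopped at stage $i$, so $Z_i \cap B_i \neq \emptyset$. I would first record that $|Z_i \cap B_i| = 1$: the set $B_i = \{z_{i-1}-1,\dots,z_{i-1}-s\}\times\{i \Mod 2\}$ consists of $s$ consecutive positions on a single circle, and since $Z_i$ (being obtained from the $s$-separated set $A$ by the swaps described in Remark \ref{Remark1}, which preserve $s$-separation on the already-processed part) contains at most one element among any $s$ consecutive positions of a circle, the intersection is a singleton, namely $(a_i, i\Mod 2)$. Then $Z_{i+1} = (Z_i \setminus \{(a_i, i\Mod 2)\}) \cup \{(z_i, i+1 \Mod 2)\}$. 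To conclude $|Z_{i+1}| = k$ it suffices to check that $(a_i, i\Mod 2) \in Z_i$ — which holds since it is the unique element of $Z_i \cap B_i$ — and that $(z_i, i+1\Mod 2) \notin Z_i$, so that the removal genuinely decreases the size by one and the addition genuinely increases it by one. For the latter, as noted in the text, $z_i = a_{i-1} - d_i$ with $1 \le d_i \le s$, and $(a_{i-1}, i+1\Mod 2) \in A$ (with $a_{-1} = n_1+1$, $z_{-1}=n_2+1$ handled as boundary conventions); since $A$ is $s$-separated, the position $(a_{i-1}-d_i, i+1\Mod 2)$ cannot lie in $A$, and one checks it has not been introduced into $Z_i$ by an earlier swap either, because the swapped-in elements $(z_j, \cdot)$ for $j < i$ lie on the opposite circle or at strictly larger coordinates. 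Hence $|Z_{i+1}| = |Z_i| = k$.

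The argument for $\widehat{Z}_i$ is verbatim the same with the roles of the two circles interchanged, using Remark \ref{Remark2} in place of Remark \ref{Remark1}; I would simply say ``by an identical argument.'' The main (mild) obstacle is the bookkeeping needed to be sure that the element $(z_i, i+1\Mod 2)$ added at stage $i$ was not already present in $Z_i$ from a previous stage's addition — this is where one must invoke the monotonicity $a_i \le z_{i-1}$ (from Remark \ref{Remark1}) and the alternation of circles via the parity $i \Mod 2$, rather than relying solely on the $s$-separation of $A$. Everything else is a routine cardinality count, so I would keep the write-up short and lean on the preceding discussion.
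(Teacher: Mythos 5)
Your proposal is correct and follows essentially the same route as the paper, whose proof of Lemma \ref{lemaksets} simply invokes the preceding discussion: at each step the unique element of $Z_i\cap B_i$ (which lies in $A$) is removed and the element $(z_i,i+1\Mod{2})=(a_{i-1}-d_i,i+1\Mod{2})$, excluded from $A$ by $s$-separation since $1\le d_i\le s$, is added, so the cardinality stays at $k$. In fact you are slightly more careful than the paper, which only checks that the added element is not in $A$, whereas you also verify (via the monotonicity $z_i<z_{i-2}$ and the alternation of circles) that it was not introduced by an earlier swap.
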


\begin{proof}
The proof follows from the discussion preceding the lemma.
\end{proof}

We are ready to provide the bijections.
\begin{definition}\label{definicionFgrande}
	Define $F:\mathcal{A}_{(1,1)}^{s,k}(n_1,n_2)\rightarrow [n_1+n_2]$ by
	\[F(A)=f(\mathcal{Z}(A)).\]
\end{definition}

\begin{definition}\label{definicionGgrande}
	Define $G:\mathcal{A}_1^{s,k}(n_1+n_2)\rightarrow [n_1,n_2]$ by
	\[G(\aleph)=\mathcal{\widehat{Z}}(\aleph).\]
\end{definition}

The remaining of this section is dedicated to prove that $F$ is a bijection from
$\mathcal{A}_{(1,1)}^{s,k}(n_1,n_2)$ onto $\mathcal{A}_1^{s,k}(n_1+n_2)$ (and that $G$ is its inverse).


\begin{lemma}\label{lemaksetsseparados}
	Let $A\in \mathcal{A}_{(1,1)}^{s,k}(n_1,n_2)$ and $\aleph \in \mathcal{A}_1^{s,k}(n_1+n_2)$ then 
	$F(A)\in \mathcal{A}_1^{s,k}(n_1+n_2)$ and $G(\aleph) \in \mathcal{A}_{(1,1)}^{s,k}(n_1,n_2)$.  
\end{lemma}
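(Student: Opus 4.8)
The plan is to combine the two lemmas already proved, namely Lemma~\ref{lema1fijo} (the output of each process is $s$-separated in the appropriate circle and contains $(1,1)$ or $1$) and Lemma~\ref{lemaksets} (each process preserves the number of elements, keeping it equal to $k$). Granting these, the statement is almost immediate: for $A\in\mathcal{A}_{(1,1)}^{s,k}(n_1,n_2)$, the set $\mathcal{Z}(A)$ is an $s$-separated $k$-set in $[n_1,n_2]$ containing $(1,1)$, so applying $f$ gives $F(A)=f(\mathcal{Z}(A))$, which by Lemma~\ref{lema1fijo} is $s$-separated in $[n_1+n_2]$; since $f$ is a bijection on underlying sets it preserves cardinality, so $F(A)$ is a $k$-set, and since $f(1,1)=1$ we get $1\in F(A)$, hence $F(A)\in\mathcal{A}_1^{s,k}(n_1+n_2)$. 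The argument for $G(\aleph)=\widehat{\mathcal{Z}}(\aleph)$ is symmetric: Lemma~\ref{lema1fijo} gives that $\widehat{\mathcal{Z}}(\aleph)$ is $s$-separated in $[n_1,n_2]$ and contains $(1,1)$, and Lemma~\ref{lemaksets} gives that it has $k$ elements, so $G(\aleph)\in\mathcal{A}_{(1,1)}^{s,k}(n_1,n_2)$.

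The one point that needs a little care, and which I expect to be the main (minor) obstacle, is checking that the processes in Definitions~\ref{definicionzeta} and~\ref{definicionzetatecho} are actually well-defined and terminate for every input in the stated domains --- that is, that at each step $Z_i\cap B_i$ (resp. $\widehat{Z}_i\cap C_i$) has at most one element, so that $a_i=\pi_1(Z_i\cap B_i)$ makes sense, and that the recursion halts at some $i\le k-2$. The bound $|Z_i\cap B_i|\le 1$ follows because $B_i$ is a block of $s$ consecutive positions in one circle and $Z_i$ remains $s$-separated in that circle throughout (this is the content of Remark~\ref{Remark1}, together with the observation in the running text that each switch replaces an element of $A$ by an element not in $A$ located $d_i\le s$ positions earlier, which cannot violate $s$-separation among the surviving elements). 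Termination is guaranteed because the process runs only over $i=0,1,\dots,k-2$ and must stop with a well-defined $\mathcal{Z}(A)=Z_i$ for some such $i$; the discussion preceding Lemma~\ref{lema1fijo} already verifies that even in the worst case $i=k-2$ the resulting set is $s$-separated and contains $(1,1)$.

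Accordingly I would write the proof of Lemma~\ref{lemaksetsseparados} in three short steps. First, invoke Lemma~\ref{lemaksets} to record $|\mathcal{Z}(A)|=k$ and $|\widehat{\mathcal{Z}}(\aleph)|=k$; since $f$ and $g$ are bijections between $[n_1,n_2]$ and $[n_1+n_2]$, the images $F(A)=f(\mathcal{Z}(A))$ and $G(\aleph)=\widehat{\mathcal{Z}}(\aleph)$ are again $k$-sets. Second, invoke Lemma~\ref{lema1fijo} to conclude that $f(\mathcal{Z}(A))$ is $s$-separated in $[n_1+n_2]$ and $\widehat{\mathcal{Z}}(\aleph)$ is $s$-separated in $[n_1,n_2]$, and that $(1,1)\in\mathcal{Z}(A)\cap\widehat{\mathcal{Z}}(\aleph)$. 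Third, translate membership of the distinguished element: from $(1,1)\in\mathcal{Z}(A)$ and $f(1,1)=1$ we get $1\in F(A)$, so $F(A)\in\mathcal{A}_1^{s,k}(n_1+n_2)$; and from $(1,1)\in\widehat{\mathcal{Z}}(\aleph)=G(\aleph)$ we get $G(\aleph)\in\mathcal{A}_{(1,1)}^{s,k}(n_1,n_2)$. This completes the verification that $F$ and $G$ land in the claimed sets, leaving the proof that they are mutually inverse for the subsequent lemmas.
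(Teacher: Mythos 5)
Your proposal is correct and follows essentially the same route as the paper: the paper's proof likewise combines Lemma~\ref{lema1fijo} (the outputs are $s$-separated and contain the distinguished element) with Lemma~\ref{lemaksets} (the outputs are $k$-sets) to conclude $F(A)\in \mathcal{A}_1^{s,k}(n_1+n_2)$ and $G(\aleph)\in \mathcal{A}_{(1,1)}^{s,k}(n_1,n_2)$. Your extra remarks on well-definedness and termination of the zig/zag processes are reasonable but not part of the paper's (very short) proof, which defers such checks to the discussion preceding Lemma~\ref{lema1fijo}.
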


\begin{proof}
By Lemma \ref{lema1fijo}, $1\in F(A)$ and $(1,1)\in G(\aleph)$, and they both are $s$-separated. By Lemma \ref{lemaksets}
both $F(A)$ and $G(\aleph)$ contain $k$ elements. Therefore $F(A)\in \mathcal{A}_1^{s,k}(n_1+n_2)$ and $G(\aleph) \in \mathcal{A}_{(1,1)}^{s,k}(n_1,n_2)$.  
	\end{proof}
	
Thus $F:\mathcal{A}_{(1,1)}^{s,k}(n_1,n_2)\rightarrow \mathcal{A}_1^{s,k}(n_1+n_2)$ and 
$G:\mathcal{A}_1^{s,k}(n_1+n_2)\rightarrow \mathcal{A}_{(1,1)}^{s,k}(n_1,n_2)$.
We can now prove that they are inverses of one another, i.e. $G=F^{-1}$.
	
	\begin{lemma}\label{biyec}
	If $F$ and $G$ are defined as in Definitions \ref{definicionFgrande} and \ref{definicionGgrande},
	then $G=F^{-1}$.
	\end{lemma}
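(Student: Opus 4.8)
The plan is to show that $G \circ F$ is the identity on $\mathcal{A}_{(1,1)}^{s,k}(n_1,n_2)$ and that $F \circ G$ is the identity on $\mathcal{A}_1^{s,k}(n_1+n_2)$; by Lemma \ref{lemaksetsseparados} both composites are well-defined maps between the stated sets, so this suffices. Since the two verifications are symmetric (the zag process mirrors the zig process with the roles of the two circles swapped and the parities shifted by one), I would prove $G(F(A)) = A$ in full and then remark that $F(G(\aleph)) = \aleph$ follows by the identical argument. Fix $A \in \mathcal{A}_{(1,1)}^{s,k}(n_1,n_2)$ with $z$-order $m \le k-2$, so that $F(A) = f(Z_m)$ where $Z_0, Z_1, \ldots, Z_m$ is the zig sequence with parameters $a_0,\ldots,a_{m-1}$, $d_0,\ldots,d_{m-1}$, $z_0,\ldots,z_{m-1}$.

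The heart of the argument is to run the zag process on $\aleph \coloneqq F(A)$ and track, by induction on $i$, that $\widehat{Z}_i = Z_{m-i}$ (as subsets of $[n_1,n_2]$, after applying $g$), that the zag $\widehat{z}$-order is exactly $m$, and that the auxiliary parameters match in reverse: $\bar a_i = z_{m-i}$, $\bar d_i = d_{m-i}$, $\bar z_i = a_{m-i-1}$, with the boundary cases $\bar a_{-1} = n_2+1 = z_{-1}$ and $\bar z_{-1} = n_1 + 1 = a_{-1}$ handled by the initialization in Definition \ref{definicionzetatecho}. The base case $\widehat{Z}_0 = g(f(Z_m)) = Z_m$ is immediate since $g = f^{-1}$ on the nose (Definitions of $f$ and $g$). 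For the inductive step, I would check that the set $C_i = \{\bar z_{i-1} - 1, \ldots, \bar z_{i-1} - s\} \times \{(i+1) \bmod 2\}$ used by the zag at stage $i$ is exactly the set from which the zig, at stage $m-i$, pulled its element: using $\bar z_{i-1} = a_{m-i}$ (inductive hypothesis, or the boundary value when $i=0$) and the parity bookkeeping, $C_i$ becomes $\{a_{m-i}-1,\ldots,a_{m-i}-s\}\times\{(m-i)+1 \bmod 2\}$, and by construction $(z_{m-i-1}, (m-i)+1 \bmod 2) = (a_{m-i}-d_{m-i}, \ldots)$ is the unique element of $Z_{m-i+1}$ lying there — this is where Remarks \ref{Remark1} and \ref{Remark2} (the $s$-separation of the "stripped" sets and the bound $1 \le d_i \le s$) are needed to guarantee that the intersection is a singleton and that the recovered element is the right one. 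Then $\widehat{Z}_{i+1}$ deletes $(\bar a_i,\ldots) = (z_{m-i},\ldots)$ and adds $(\bar z_i, \ldots) = (a_{m-i-1},\ldots)$, which is precisely the inverse of the zig move $Z_{m-i-1} \to Z_{m-i}$, giving $\widehat{Z}_{i+1} = Z_{m-i-1}$.

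Once the induction reaches $i = m$ we get $\widehat{Z}_m = Z_0 = g(A)$, and I must also confirm the zag process genuinely stops at order $m$ and not earlier: stopping earlier would mean $\widehat{Z}_i \cap C_i = \emptyset$ for some $i < m$, but the identification above shows that intersection contains $(a_{m-i}-d_{m-i}, \ldots)$, contradiction. Hence $G(F(A)) = \mathcal{\widehat Z}(F(A)) = \widehat{Z}_m = g(A)$ — wait, here I need to be careful about the final $g$: since $G(\aleph) = \mathcal{\widehat Z}(\aleph)$ is already a subset of $[n_1,n_2]$ and $\widehat{Z}_0 = g(\aleph)$, the equality $\widehat Z_m = Z_0 = A$ gives $G(F(A)) = A$ directly. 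The main obstacle, and the only place real care is required, is the parity/index bookkeeping: the zig uses residues $i \bmod 2$ and $i+1 \bmod 2$ while the zag uses $(i+1)\bmod 2$ and $i \bmod 2$, and the reversal $i \leftrightarrow m-i$ interacts with these in a way that must be checked to be consistent (it is, precisely because the zag's parity convention is the zig's convention shifted by one, which compensates for the orientation reversal). I would organize this as a single clean lemma-internal induction and then invoke symmetry for $F \circ G = \mathrm{id}$, completing the proof that $G = F^{-1}$.
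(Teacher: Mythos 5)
Your high-level plan (prove $G\circ F=\mathrm{id}$ and $F\circ G=\mathrm{id}$ by running the zag on $F(A)$ and matching it step by step against the zig) is exactly the paper's plan, but the inductive claim you build the whole argument on is false: the zag does \emph{not} retrace the zig in reverse chronological order. The zag's first window $C_0=\{n_1,\dots,n_1-s+1\}\times\{1\}$ sits precisely where the zig's \emph{first} switch deposited its element $(z_0,1)=(n_1+1-d_0,1)$ (later zig switches deposit strictly smaller elements), so the zag undoes the switches in the \emph{same} order, with $\bar{a}_i=z_i$, $\bar{d}_i=d_i$, $\bar{z}_i=a_i$ --- which is what the paper's proof asserts --- and the intermediate sets $\widehat{Z}_i$ are in general not equal to any $Z_j$. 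Concretely, take $s=1$, $k=3$, $n_1=4$, $n_2=3$, $A=\{(1,1),(3,1),(3,2)\}$: the zig performs two switches, $Z_1=\{(1,1),(3,1),(4,1)\}$, $Z_2=\{(1,1),(4,1),(2,2)\}$, so $F(A)=\{1,4,6\}$; the zag applied to $\{1,4,6\}$ gives $\widehat{Z}_1=\{(1,1),(2,2),(3,2)\}\neq Z_1$ and only $\widehat{Z}_2=A$. Hence your induction hypothesis $\widehat{Z}_i=Z_{m-i}$ (with $\bar{a}_i=z_{m-i}$, $\bar{d}_i=d_{m-i}$, $\bar{z}_i=a_{m-i-1}$) already fails at $i=1$, and the inductive step cannot be carried out as described; the identity you invoke there, $(z_{m-i-1},\cdot)=(a_{m-i}-d_{m-i},\cdot)$, also conflicts with the definition $z_j=a_{j-1}-d_j$ (the element lying in the window below $a_j$ is $z_{j+1}$, not $z_{j-1}$). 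Note that your own boundary identifications $\bar{z}_{-1}=n_1+1=a_{-1}$ and $\bar{a}_{-1}=n_2+1=z_{-1}$ feed the forward correspondence, not the reversed one.

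The repair is to run your induction with the forward correspondence: show by induction on $i$ that $\widehat{Z}_i\cap C_i=\{(z_i,\cdot)\}$, so $\bar{a}_i=z_i$, $\bar{d}_i=d_i$, $\bar{z}_i=a_i$, i.e.\ zag step $i$ deletes the element that zig step $i$ added and restores the element that zig step $i$ deleted; the singleton claim uses, as you anticipated, the separation of the untouched elements (Remarks \ref{Remark1} and \ref{Remark2}) together with the fact that the moved elements are strictly decreasing. You must also check that the zag stops at exactly the same order as the zig: it cannot stop earlier because $(z_i,\cdot)\in\widehat{Z}_i\cap C_i$, and it stops there because the final zag set equals $A$, which is $s$-separated and contains $(a_{m-1},\cdot)$, hence meets the window of the $s$ positions just below $a_{m-1}$ in nothing (and in the degenerate case of $z$-order $0$, $A\cap C_0=\emptyset$ because $(1,1)\in A$ and $A$ is $s$-separated in the first circle). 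With that corrected correspondence the rest of your write-up, including the symmetric verification of $F\circ G=\mathrm{id}$, goes through and coincides with the paper's argument.
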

	
	\begin{proof}
	We will prove that $G(F(A))=A$, and $F(G(\A))=\A$ for every $A\in\mathcal{A}_{(1,1)}^{s,k}(n_1,n_2)$
	and $\A \in\mathcal{A}_{1}^{s,k}(n_1,+n_2)$.

Let $A\in \mathcal{A}_{(1,1)}^{s,k}(n_1,n_2)$ and let $\A = F(A)$. 
Then $\bar{a}_0=z_0$ and the first switch will be 
$\z_0=a_0$ which means that $\bar{a}_i=z_i$, $\bar{d}_i=d_i$ and $\bar{z}_i=a_i$. 
Hence the $z$-order of $A$ is the same as the $\widehat{z}$-order of $\A$, thus we have that

\[ G(\aleph) = G(F(A)) = A. \]

Let now $\A\in\mathcal{A}_{1}^{s,k}(n_1,+n_2)$ and $A=G(\A)$. Applying the same reasoning as before, we have that 

\[ F(A) = F(G(\A)) = \A, \]

so $G\circ F = id$ and $F\circ G = id$.
	\end{proof}
	
Thus we have that $F$ is a bijection from $\mathcal{A}_{(1,1)}^{s,k}(n_1,n_2)$ onto $\mathcal{A}_{1}^{s,k}(n_1+n_2)$.
Which means that $|\mathcal{A}_{(1,1)}^{s,k}(n_1,n_2)|=|\mathcal{A}_{1}^{s,k}(n_1+n_2)|=\binom{n_1+n_2-sk-1}{k-1}$.

\begin{theorem}\label{21circ}
	 Let $n_1\geq sk+1$ and $n_2\geq sk$. Then 
	\[
	\left| \mathcal{A}_{(1,1)}^{s,k}(n_1,n_2)\right|=\binom{n_1+n_2-sk-1}{k-1}.
	\]
\end{theorem}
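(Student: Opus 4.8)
The plan is to assemble Theorem \ref{21circ} directly from the machinery already built in this section, treating it essentially as a corollary of the bijection results and Talbot's Lemma. First I would observe that the hypotheses $n_1 \geq sk+1$ and $n_2 \geq sk$ are exactly the standing assumptions under which the \emph{zig} (Definition \ref{definicionzeta}) and \emph{zag} (Definition \ref{definicionzetatecho}) processes were defined, so the maps $F$ and $G$ from Definitions \ref{definicionFgrande} and \ref{definicionGgrande} are well-defined on the relevant domains. By Lemma \ref{lemaksetsseparados}, $F$ maps $\mathcal{A}_{(1,1)}^{s,k}(n_1,n_2)$ into $\mathcal{A}_1^{s,k}(n_1+n_2)$ and $G$ maps $\mathcal{A}_1^{s,k}(n_1+n_2)$ into $\mathcal{A}_{(1,1)}^{s,k}(n_1,n_2)$; by Lemma \ref{biyec}, $G = F^{-1}$. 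Hence $F$ is a bijection and $\left|\mathcal{A}_{(1,1)}^{s,k}(n_1,n_2)\right| = \left|\mathcal{A}_1^{s,k}(n_1+n_2)\right|$.

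Next I would invoke Lemma \ref{teorematalbot} (Talbot's Lemma) with $n$ replaced by $n_1+n_2$, which gives $\left|\mathcal{A}_1^{s,k}(n_1+n_2)\right| = \binom{n_1+n_2-ks-1}{k-1}$. Here I should double-check the side condition under which Talbot's Lemma applies: the statement as quoted has no explicit hypothesis, but for the binomial coefficient to count what it should one wants $n_1+n_2 \geq sk+1$, which is implied by $n_1 \geq sk+1$ alone. Combining the two displayed equalities yields $\left|\mathcal{A}_{(1,1)}^{s,k}(n_1,n_2)\right| = \binom{n_1+n_2-sk-1}{k-1}$, which is the claim.

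Since the heavy lifting has already been done — establishing that zig/zag preserve $k$-set size (Lemma \ref{lemaksets}), preserve $s$-separatedness and the marked element (Lemma \ref{lema1fijo}), and are mutually inverse (Lemma \ref{biyec}) — the proof of Theorem \ref{21circ} itself is just a two-line chain of equalities. The main (and really the only) obstacle is making sure the hypotheses line up: one must confirm that $n_1 \geq sk+1, n_2 \geq sk$ is precisely what the zig and zag constructions require, and that no stronger symmetric hypothesis on $n_2$ is secretly needed. Inspecting the boundary analysis that precedes Lemma \ref{lema1fijo} — where the cases $k$ even and $k$ odd are handled, using $z_{k-2} \geq n_1+1-s(k-1) \geq s+1$ and $z_{k-2} \geq n_2+1-s(k-1) \geq s$ respectively — shows the asymmetric bounds are exactly what is consumed, so nothing further is required and the corollary-style argument goes through cleanly.
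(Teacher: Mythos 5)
Your proposal is correct and follows essentially the same route as the paper: the paper's proof of Theorem \ref{21circ} likewise combines the bijection established in Lemma \ref{biyec} (via Lemma \ref{lemaksetsseparados}) with Lemma \ref{teorematalbot} applied to a circle of size $n_1+n_2$. Your extra care in checking that the hypotheses $n_1\geq sk+1$, $n_2\geq sk$ match what the zig and zag constructions require is a reasonable addition but does not change the argument.
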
 

\begin{proof}
	From Lemma \ref{biyec}, $\left|\mathcal{A}_{(1,1)}^{s,k}(n_1,n_2)\right|=\left|\mathcal{A}_1^{s,k}(n_1+n_2)\right|$. Thus 
	Lemma \ref{teorematalbot} yields
	
		\[
		\left| \mathcal{A}_{(1,1)}^{s,k}(n_1,n_2)\right|=\binom{n_1+n_2-sk-1}{k-1}.
		\]
\end{proof}

 \section{$s$-separated $k$-sets in several circles with a fixed element}\label{seccionvarioscirculoscon1}
 
 In this section, we show that the number of $s$-separated $k$-sets in $p$ circles of sizes $n_1,\dots,n_p$  having a fixed element, is equal to the number of $s$-separated $k$-sets in one circle of size $n_1+\cdots+n_p$ having a fixed element.
 
By $\mathcal{A}_{(1,1)}^{s,k}(n_1,\ldots,n_p)$ we denote all the $s$-separated $k$-sets in $[n_1,\ldots,n_p]$ 
cointaining the element $(1,1)$.
Notice that any set in $\mathcal{A}_{(1,1)}^{s,k}(n_1,\ldots,n_p)$ with $j$ elements in the first $p-1$ circles
can be seen as a set in 
$\mathcal{A}_{(1,1)}^{s,j}(n_1,\ldots,n_{p-1})\cup \left([n_p]^s_{k-j}\times\{p\}\right)$. Thus, adding over 
all possible $j$ we obtain.
\begin{equation}\label{eq1}
|\mathcal{A}_{(1,1)}^{s,k}(n_1,\ldots,n_p)|=\sum_{j=1}^{k}|\mathcal{A}_{(1,1)}^{s,j}(n_1,\ldots,n_{p-1})|\times
|[n_p]^s_{k-j}|.
\end{equation}

We can use Equation \ref{eq1} inductively to obtain the following.

 \begin{theorem}\label{teorema1fijovarioscirc}
 	Let $n_1\geq sk+1$ and $n_i\geq sk$ for $i=2,\dots,p$. Let  
 	
 	\[
 	\mathcal{A}_{(1,1)}^{s,k}(n_1,\dots,n_p) = \{ A\in [n_1,\dots,n_p]_k^s: \: (1,1)\in A \}
 	\] 
 	
 	then 
 	
 	\[ \left| \mathcal{A}_{(1,1)}^{s,k}(n_1,\dots,n_p) \right| = \binom{N-sk-1}{k-1}, \]
 	
 where $N = \sum_{i=1}^{p}n_i$.
 \end{theorem}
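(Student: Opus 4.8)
The plan is to induct on the number of circles $p$, using Equation~\eqref{eq1} as the inductive step and Theorem~\ref{21circ} (together with Lemma~\ref{teorematalbot}) as the base. For the base case $p=1$ the claim is exactly Lemma~\ref{teorematalbot}, since $N=n_1$ and $\mathcal{A}_{(1,1)}^{s,k}(n_1)$ is in obvious bijection with $\mathcal{A}_1^{s,k}(n_1)$; for $p=2$ it is Theorem~\ref{21circ}. So assume $p\geq 2$ and that the formula holds for $p-1$ circles, for every value of $k$ that is consistent with the size hypotheses. Write $M = n_1+\cdots+n_{p-1}$, so that $N = M + n_p$.

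First I would record the two ingredients Equation~\eqref{eq1} needs: by the inductive hypothesis, $|\mathcal{A}_{(1,1)}^{s,j}(n_1,\dots,n_{p-1})| = \binom{M-sj-1}{j-1}$ for each $j$ with $1\le j\le k$ (note $n_1\ge sk+1\ge sj+1$ and $n_i\ge sk\ge sj$, so the hypotheses of the $(p-1)$-circle theorem are met), and by Theorem~\ref{teoremamansour} applied on a single circle, $|[n_p]_{k-j}^s| = \frac{n_p}{n_p-s(k-j)}\binom{n_p-s(k-j)}{k-j} = \binom{n_p-s(k-j)-1}{k-j-1} + \binom{n_p-s(k-j)-1}{k-j}$; in fact it is cleaner to note that $|[n_p]_{\ell}^s|$ counts $s$-separated $\ell$-sets in a circle of size $n_p$, which by a standard argument (split on whether a fixed element is used) equals $\binom{n_p-s\ell-1}{\ell-1}+\binom{n_p-s\ell}{\ell}$ when $n_p\ge s\ell+1$, and equals $1$ when $\ell=0$. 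Substituting both into \eqref{eq1} reduces the theorem to the purely algebraic identity
\[
\sum_{j=1}^{k}\binom{M-sj-1}{j-1}\,\bigl|[n_p]_{k-j}^{s}\bigr| \;=\; \binom{M+n_p-sk-1}{k-1}.
\]

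The main obstacle is precisely this binomial-sum verification, so I would handle it by a clean combinatorial (or generating-function) argument rather than brute force. The slick route is to reinterpret the right-hand side via the $s$-separation ``contraction'' trick: an $s$-separated $k$-set of $[N]$ containing $1$, after subtracting $s$ from the gaps, biject onto ordinary $(k-1)$-subsets of an $(N-sk-1)$-element set — which is why $\binom{N-sk-1}{k-1}$ appears — and the left-hand side is just the same count stratified by how many of the chosen elements lie in the ``$[n_p]$ block'' versus the rest, the $j$-th term being exactly the contracted count of configurations with $k-j$ elements from the $n_p$-block and $j$ (including the fixed one) from the first $M$ positions. Concretely, I would set $a=M-sk-1+s$, $b=n_p-s\,\cdot$, align the index shifts so that each summand becomes a product $\binom{\text{something}}{j-1}\binom{\text{something}}{k-j}$ (or a sum of two such, from the two terms of $|[n_p]_\ell^s|$), and collapse it with Vandermonde's identity $\sum_i \binom{x}{i}\binom{y}{m-i}=\binom{x+y}{m}$. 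The only real care needed is bookkeeping the off-by-one in the two pieces of $|[n_p]_\ell^s|$ and the degenerate $j=k$ term where $|[n_p]_0^s|=1$; once the indices are matched, Vandermonde closes the argument and the induction is complete.

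Alternatively — and this is the cleaner writeup I would ultimately prefer — one can bypass the algebra entirely by a \emph{direct} bijective argument generalizing Section~\ref{seccion2circulos}: iterate the map $F$ of Definition~\ref{definicionFgrande} to merge circles two at a time, i.e. first merge circles $p-1$ and $p$ into a single circle of size $n_{p-1}+n_p$ (legitimate since $n_{p-1}\ge sk$ and $n_{p-1}+n_p\ge sk$, and the fixed element $(1,1)$ is untouched because it lives in circle $1$), reducing to $p-1$ circles, and repeat. Each merge preserves membership in the relevant $\mathcal{A}_{(1,1)}^{s,k}$ by Lemma~\ref{lemaksetsseparados} and is a bijection by Lemma~\ref{biyec}, so after $p-1$ merges one lands in $\mathcal{A}_1^{s,k}(N)$ and invokes Lemma~\ref{teorematalbot}. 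The subtlety here is that Theorem~\ref{21circ} as stated merges circles $1$ and $2$ with the fixed point in circle~$1$; to merge two circles neither of which is the fixed circle one should observe that the construction of $F$ never uses the fixed element except to guarantee that the switching chain terminates, and terminating is automatic from the bound $a_i\le n-s(i+1)$ once $n\ge sk$, so the same bijection works between an auxiliary pair of circles sitting inside the larger configuration. Whichever route is chosen, the induction on $p$ is the skeleton and the content is entirely in one step — Vandermonde on one side, the iterated $F$ on the other.
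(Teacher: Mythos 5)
Your skeleton (induction on $p$, with Equation \ref{eq1} as the inductive engine and Theorem \ref{21circ} as the base) is the paper's skeleton, but both of your proposed ways of closing the inductive step have genuine gaps. In the algebraic route, the sum $\sum_{j=1}^{k}\binom{M-sj-1}{j-1}\bigl|[n_p]^s_{k-j}\bigr|$ is \emph{not} a Vandermonde convolution: the upper entries $M-sj-1$ and $n_p-s(k-j)$ both move with $j$ (slope $s$), so plain Vandermonde applies only when $s=0$; what is needed is a Rothe--Hagen type identity, which is essentially Corollary \ref{corocombin1} --- a statement the paper \emph{deduces from} this theorem, not the other way around, so invoking it here would need an independent proof you do not supply. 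Your closed forms for $\bigl|[n_p]^s_\ell\bigr|$ are also wrong for $s\ge 2$: the correct split on a fixed element gives $\frac{n_p}{n_p-s\ell}\binom{n_p-s\ell}{\ell}=\binom{n_p-s\ell}{\ell}+s\binom{n_p-s\ell-1}{\ell-1}$, because deleting one point of a circle does not linearize the wrap-around separation constraint when $s\ge2$. Finally, the ``stratification'' reinterpretation is false term by term: for $s=1$, $k=2$, $M=4$, $n_p=2$ the sets of $\mathcal{A}^{1,2}_{1}(6)$ are $\{1,3\},\{1,4\},\{1,5\}$, so the strata according to how many elements lie in the last two positions have sizes $2$ and $1$, while the products $\binom{M-sj-1}{j-1}\bigl|[n_p]^s_{k-j}\bigr|$ are $1$ and $2$. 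Only the totals agree, and proving that the totals agree is exactly the content of Theorem \ref{21circ} applied to circles of sizes $M$ and $n_p$ --- which is precisely what the paper does, so this route either begs the question or collapses back into the paper's argument.

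Your preferred bijective route also has a real gap: the claim that $F$ ``never uses the fixed element except to guarantee that the switching chain terminates'' is incorrect. The hypothesis $(1,1)\in A$ is what makes the junction between the end of circle $1$ and the start of circle $2$ automatically conflict-free, and it is what guarantees the first landing position $(n_1-d,1)$ of the zig chain is unoccupied; Lemmas \ref{lemaksetsseparados} and \ref{biyec} are proved only for a pair of circles whose \emph{first} member contains the fixed element (and with $n_1\ge sk+1$, whereas your merge of circles $p-1$ and $p$ only has $n_{p-1}\ge sk$). A bijective merge of two circles neither of which carries a distinguished occupied element is essentially Theorem \ref{principal}, proved later \emph{from} the present theorem, so as sketched this is circular. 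A repaired version of your idea --- iteratively merge circle $1$, which keeps the fixed element, with circle $2$ --- would work. The paper itself closes the induction without evaluating anything: it applies Equation \ref{eq1} twice, once to the $p$ circles (using the induction hypothesis for every $j\le k$) and once to the two circles of sizes $M=n_1+\cdots+n_{p-1}$ and $n_p$, notes that the two right-hand sides coincide, concludes $|\mathcal{A}^{s,k}_{(1,1)}(n_1,\dots,n_p)|=|\mathcal{A}^{s,k}_{(1,1)}(M,n_p)|$, and then quotes Theorem \ref{21circ} and Lemma \ref{teorematalbot}.
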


\begin{proof}
Notice that Lemma \ref{teorematalbot} states

\[
  |\mathcal{A}_{(1,1)}^{s,k}(n_1+\cdots+n_p)| = \binom{n_1+\ldots+n_p-sk-1}{k-1}.
  \]
  We will show by induction on $p$, the number of circles, that 
\[
 	 |\mathcal{A}_{(1,1)}^{s,k}(n_1,\dots,n_{p})| = |\mathcal{A}_{(1,1)}^{s,k}(n_1+\dots+n_{p})|, \]    
 	 and so the result will follow from Lemma \ref{teorematalbot}.
  The case $p = 2$ is Theorem \ref{21circ}.\\
 	Assume that
 	
 	 \[
 	 |\mathcal{A}_{(1,1)}^{s,k}(n_1,\dots,n_{p-1})| = |\mathcal{A}_{(1,1)}^{s,k}(n_1+\dots+n_{p-1})|. \]  
 	
 	Then, Equation \ref{eq1} gives 
 	
 \begin{equation}\label{eq2}
 |\mathcal{A}_{(1,1)}^{s,k}(n_1,\ldots,n_p)|=\sum_{j=1}^{k}|\mathcal{A}_{(1,1)}^{s,j}(n_1+\cdots+n_{p-1})|\times
 |[n_p]^s_{k-j}|.
 \end{equation}

On the other hand, equation \ref{eq1} applied to a circle of size $n_1+\ldots+n_{p-1}$ and a circle of size $n_p$
implies

 \begin{equation}\label{eq3}
 |\mathcal{A}_{(1,1)}^{s,k}(n_1+\ldots+n_{p-1},n_p)|=\sum_{j=1}^{k}|\mathcal{A}_{(1,1)}^{s,j}(n_1+\cdots+n_{p-1})|\times
 |[n_p]^s_{k-j}|.
 \end{equation}
 
 Then the left sides of Equation \ref{eq2} equals the left side of Equation \ref{eq3},
 
 \[
 |\mathcal{A}_{(1,1)}^{s,k}(n_1,\ldots,n_p)| =  |\mathcal{A}_{(1,1)}^{s,k}(n_1+\ldots+n_{p-1},n_p)|.
 \]
 
 By Theorem \ref{21circ},
  
 \[
 |\mathcal{A}_{(1,1)}^{s,k}(n_1+\ldots+n_{p-1},n_p)| = \binom{N-sk-1}{k-1}
 \]

  and by Lemma \ref{teorematalbot},
  
  \[
  |\mathcal{A}_{(1,1)}^{s,k}(n_1+\cdots+n_p)| = \binom{N-sk-1}{k-1}.
  \]
  
 Therefore 
 
 \[
 \left| \mathcal{A}_{(1,1)}^{s,k}(n_1,\dots,n_p) \right| = \binom{N-sk-1}{k-1}=
 |\mathcal{A}_{(1,1)}^{s,k}(n_1+\cdots+n_p)|
 \]
and the proof by induction follows.  
\end{proof}

Theorem \ref{teorema1fijovarioscirc} counts the number of sets in $[n_1,\ldots,n_p]^s_k$ containing $(1,1)$.
Notice that if $n_j\geq sk+1$, then the elements can be relabeled to count the number of sets containing
any element $(a,j)$ in the $j$-th circle.
 
 \begin{corollary}\label{corolariocualquierfijo}
  	Let $n_j\geq sk+1$ and for $i=1,\dots,p$, $n_i\geq sk$. Let  $(a,j)\in [n_j]\times\{j\}$ and
 	
 	\[
 	\mathcal{A}_{(a,j)}^{s,k}(n_1,\dots,n_p) = \{ A\in [n_1,\dots,n_p]_k^s: \: (a,j)\in A \}
 	\] 
 	
 	then 
 	
 	\[ \left| \mathcal{A}_{(a,j)}^{s,k}(n_1,\dots,n_p) \right| = \binom{N-sk-1}{k-1}, \]
 	
 where $N = \sum_{i=1}^{p}n_i$.
 \end{corollary}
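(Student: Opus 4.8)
The plan is to reduce the general fixed-element case to the already-established case of Theorem~\ref{teorema1fijovarioscirc}, where the fixed element is $(1,1)$, by a relabeling (cyclic rotation) argument on each circle. First I would observe that the notion of being $s$-separated is intrinsic to the cyclic structure of each circle: it depends only on the cyclic distances between selected elements within a circle, not on which element we happen to have labeled $1$. Concretely, for the $j$-th circle, I would define the rotation $\rho:[n_j]\times\{j\}\to[n_j]\times\{j\}$ by $\rho(i,j)=(i-a+1\bmod n_j,\,j)$ (taking representatives in $\{1,\dots,n_j\}$), which sends $(a,j)$ to $(1,j)$, and I would let $\rho$ act as the identity on every other circle. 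This $\rho$ is a bijection of $[n_1,\dots,n_p]$ to itself.

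Next I would check that $\rho$ preserves $s$-separation: since $\rho$ is a cyclic rotation on circle $j$ and the identity elsewhere, it preserves cyclic distances within every circle, so a $k$-set $A$ is $s$-separated if and only if $\rho(A)$ is $s$-separated. Moreover $\rho$ is size-preserving, so it restricts to a bijection from $[n_1,\dots,n_p]^s_k$ to itself. Restricting attention to sets containing the distinguished element, $A\ni(a,j)$ if and only if $\rho(A)\ni(1,j)$, so $\rho$ restricts to a bijection
\[
\mathcal{A}_{(a,j)}^{s,k}(n_1,\dots,n_p)\ \longrightarrow\ \mathcal{A}_{(1,j)}^{s,k}(n_1,\dots,n_p).
\]
Hence $|\mathcal{A}_{(a,j)}^{s,k}(n_1,\dots,n_p)|=|\mathcal{A}_{(1,j)}^{s,k}(n_1,\dots,n_p)|$.

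Finally I would reduce $\mathcal{A}_{(1,j)}^{s,k}$ to $\mathcal{A}_{(1,1)}^{s,k}$ by permuting the circles: let $\sigma$ be the permutation of $\{1,\dots,p\}$ that swaps $1$ and $j$, and let it act on $[n_1,\dots,n_p]$ and on $[n_{\sigma(1)},\dots,n_{\sigma(p)}]$ in the obvious coordinatewise way. Since $n_j\ge sk+1$ by hypothesis, the reindexed tuple $(n_{\sigma(1)},\dots,n_{\sigma(p)})=(n_j,\dots,n_1,\dots)$ has first entry $\ge sk+1$ and all entries $\ge sk$, so Theorem~\ref{teorema1fijovarioscirc} applies to it and gives $|\mathcal{A}_{(1,1)}^{s,k}(n_{\sigma(1)},\dots,n_{\sigma(p)})|=\binom{N-sk-1}{k-1}$ (the sum $N$ is unchanged by permuting the $n_i$). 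Chaining the two bijections gives the claim. I do not anticipate a serious obstacle here; the only point requiring a little care is the bookkeeping of indices modulo $n_j$ in the definition of $\rho$ and confirming that the hypotheses of Theorem~\ref{teorema1fijovarioscirc} survive the reindexing $\sigma$, which they do precisely because the distinguished circle $j$ is the one assumed to have size $\ge sk+1$.
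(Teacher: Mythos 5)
Your proposal is correct and is essentially the paper's own argument: the paper also proves the corollary by relabeling the elements and circles so that $(a,j)$ becomes $(1,1)$ and then invoking Theorem~\ref{teorema1fijovarioscirc}, with the hypothesis $n_j\geq sk+1$ ensuring the relabeled first circle satisfies that theorem's assumptions. You simply spell out the rotation and circle-permutation bijections in more detail than the paper does.
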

 \begin{proof}
 The corollary follows from Theorem \ref{teorema1fijovarioscirc} after relabeling the elements and circles
 such that $(a,j)$ is labeled $(1,1)$.
 \end{proof}

Notice that the equalities 
\begin{align*}
|\mathcal{A}_{(1,1)}^{s,k}(n_1,n_2)|&=\binom{n_1+n_2-sk-1}{k-1}\\
|\mathcal{A}_{(1,1)}^{s,k}(n_1)|&=\binom{n_1-sk-1}{k-1}\\
&\text{and}\\
|[n_2]^s_k|&=\frac{n}{n-sk}\binom{n-sk}{k},
\end{align*} 
together with Theorem \ref{teorema1fijovarioscirc} yield the following.
  \begin{corollary}\label{corocombin1}
 If $m\geq sk+1$ and $n\geq sk$ then
 	\[ \sum_{j=0}^{k-1}\binom{n-s(k-j)-1}{k-j-1} \frac{m}{m-sj}\binom{m-sj}{j-1} = \binom{m+n-sk-1}{k-1}. \]
 \end{corollary}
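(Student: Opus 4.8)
The identity in Corollary \ref{corocombin1} is a specialization of the two-circle result (Theorem \ref{21circ}, or equivalently Theorem \ref{teorema1fijovarioscirc} with $p=2$) combined with the ``peeling off one circle'' recursion of Equation \eqref{eq1}. The plan is to count $|\mathcal{A}_{(1,1)}^{s,k}(m,n)|$ in two ways: once directly via the closed form, and once by splitting according to how many of the $k$ selected elements lie in the first circle $[m]\times\{1\}$ (which contains the fixed element $(1,1)$) versus the second circle $[n]\times\{2\}$.

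\emph{Right-hand side.} By Theorem \ref{21circ}, since $m \geq sk+1$ and $n \geq sk$, we have directly
\[
\left|\mathcal{A}_{(1,1)}^{s,k}(m,n)\right| = \binom{m+n-sk-1}{k-1}.
\]
This is the target right-hand side, so no work is needed here beyond citing the theorem and checking the hypotheses.

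\emph{Left-hand side.} Suppose an $s$-separated $k$-set $A\in\mathcal{A}_{(1,1)}^{s,k}(m,n)$ has exactly $k-j$ elements in the second circle, hence $j$ elements in the first circle; since $(1,1)\in A$, we have $1\leq j\leq k$, equivalently $0\leq k-j\leq k-1$. The $j$ elements in the first circle form an $s$-separated $j$-set of $[m]$ containing $1$, counted by $|\mathcal{A}_1^{s,j}(m)| = \binom{m-sj-1}{j-1}$ (Lemma \ref{teorematalbot} applied to a single circle), and the $k-j$ elements in the second circle form an arbitrary $s$-separated $(k-j)$-set of $[n]$, counted by $|[n]_{k-j}^s| = \frac{n}{n-s(k-j)}\binom{n-s(k-j)}{k-j}$ (Theorem \ref{teoremamansour}). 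Since the two circles are independent, summing over $j$ gives
\[
\left|\mathcal{A}_{(1,1)}^{s,k}(m,n)\right| = \sum_{j=1}^{k} \binom{m-sj-1}{j-1}\cdot \frac{n}{n-s(k-j)}\binom{n-s(k-j)}{k-j}.
\]
Re-indexing the sum by $i = k-j$ (so $i$ runs from $0$ to $k-1$ and $j = k-i$) converts this into $\sum_{i=0}^{k-1}\binom{m-s(k-i)-1}{k-i-1}\frac{n}{n-si}\binom{n-si}{i}$, which, after swapping the dummy names $m\leftrightarrow n$ to match the statement, is exactly the left-hand side of the displayed identity. Equating the two expressions for $|\mathcal{A}_{(1,1)}^{s,k}(m,n)|$ completes the proof.

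\emph{Main obstacle.} There is no serious obstacle: the argument is a clean double-count, and all three enumerative inputs (Lemma \ref{teorematalbot}, Theorem \ref{teoremamansour}, Theorem \ref{21circ}) are already established. The only point requiring care is bookkeeping—matching the index conventions of Equation \eqref{eq1} (which sums $j$ from $1$ to $k$, with $j$ counting elements in the ``fixed'' circle) against the statement of the corollary (which sums from $0$ to $k-1$ and writes the binomials with $k-j$), and making sure the edge cases $j=k$ (all selected elements in the first circle, the second contributing the empty set, counted by $\binom{n}{0}=1$ via the convention $|[n]_0^s|=1$) and the lower limit behave correctly. One should also double-check that the term indices $n-s(k-j)$ and $m-sj-1$ stay nonnegative under the hypotheses $m\geq sk+1$, $n\geq sk$, so that all binomial coefficients are the ``honest'' ones rather than zero by convention; this is immediate since $j\leq k$.
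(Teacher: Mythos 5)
Your double count is exactly the derivation the paper intends: decompose $\mathcal{A}_{(1,1)}^{s,k}(m,n)$ by how many chosen elements lie in each circle (Equation \ref{eq1} with $p=2$), evaluate the fixed-element factor by Lemma \ref{teorematalbot} and the free factor by Theorem \ref{teoremamansour}, and equate with Theorem \ref{21circ}. That core is sound and matches the paper's argument.

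The trouble is your final ``matching'' step, which does not actually land on the printed statement. First, $m$ and $n$ are not dummy names you may swap: the hypotheses $m\geq sk+1$, $n\geq sk$ are asymmetric, and your count (correctly, since Theorem \ref{21circ} needs the fixed element in the circle of size at least $sk+1$) puts the fixed element in the circle of size $m$. What your argument proves is $\sum_{j=0}^{k-1}\binom{m-s(k-j)-1}{k-j-1}\frac{n}{n-sj}\binom{n-sj}{j}=\binom{m+n-sk-1}{k-1}$; swapping $m\leftrightarrow n$ yields the statement's arrangement only under the swapped hypotheses $n\geq sk+1$, $m\geq sk$. Second, even after the swap your summand is $\frac{m}{m-sj}\binom{m-sj}{j}$, i.e.\ $|[m]^s_j|$, whereas the corollary prints $\frac{m}{m-sj}\binom{m-sj}{j-1}$; these are not equal, and the identity as printed is in fact false (take $k=1$: the left side is $\binom{n-s-1}{0}\binom{m}{-1}=0$ while the right side is $1$). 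The lower index $j-1$ is evidently a typo for $j$, and the roles of $m$ and $n$ in the hypotheses versus the summand are likewise garbled in the paper, but a correct write-up must say this explicitly and prove the corrected identity with the Talbot-type factor attached to the circle of size at least $sk+1$, rather than assert, as you do, that your expression ``is exactly the left-hand side of the displayed identity.''
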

  \section{$s$-separated $k$-sets in several circles}\label{seccionprincipal}
In this section we will use Corollary \ref{corolariocualquierfijo} to count the number of 
$s$-separated $k$-sets in several circles of various sizes, provided all of the sizes
are at least $sk+1$.  
  In order to do so, we add over every element of the circle the number of $s$-separated sets
  containing it, and then divide by the number of elements in each set.
 \begin{theorem}\label{principal}
 	Let $n_i\geq sk+1$ for $i=1,\dots,p$. Then 
 	
 	\[ |[n_1,\dots,n_p]_k^s| = \frac{N}{k}\binom{N-sk-1}{k-1} \] 
 where $N = \sum_{i=1}^{p}n_i$.	
 \end{theorem}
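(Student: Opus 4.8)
The plan is to use a standard double-counting argument. I would count, in two different ways, the number of pairs $(A,x)$ where $A\in[n_1,\dots,n_p]_k^s$ and $x\in A$. On one hand, every $s$-separated $k$-set $A$ contributes exactly $k$ such pairs (one for each of its $k$ elements), so the total is $k\cdot|[n_1,\dots,n_p]_k^s|$. On the other hand, I would organize the count by the fixed element $x$: for each element $(a,j)\in[n_1,\dots,n_p]$, the number of sets $A$ with $(a,j)\in A$ is precisely $|\mathcal{A}_{(a,j)}^{s,k}(n_1,\dots,n_p)|$, and by Corollary \ref{corolariocualquierfijo} (which applies because every $n_i\geq sk+1$, so in particular the hypotheses $n_j\geq sk+1$ and $n_i\geq sk$ are met for every choice of $(a,j)$) this equals $\binom{N-sk-1}{k-1}$, independently of $(a,j)$. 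Since there are exactly $N=\sum_{i=1}^p n_i$ elements in $[n_1,\dots,n_p]$, the total count is $N\binom{N-sk-1}{k-1}$.

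Equating the two counts gives $k\cdot|[n_1,\dots,n_p]_k^s| = N\binom{N-sk-1}{k-1}$, and dividing by $k$ yields the claimed formula
\[
|[n_1,\dots,n_p]_k^s| = \frac{N}{k}\binom{N-sk-1}{k-1}.
\]

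There is essentially no hard step here; the real content was already absorbed into Corollary \ref{corolariocualquierfijo}, and the only thing to be a little careful about is checking that the hypothesis $n_i\geq sk+1$ for all $i$ is exactly what is needed to invoke that corollary for an arbitrary fixed element $(a,j)$ — the corollary requires the circle containing the fixed element to have size at least $sk+1$, and here every circle does. One might also remark in passing that $k$ divides $N\binom{N-sk-1}{k-1}$, which is automatic since the left-hand count $|[n_1,\dots,n_p]_k^s|$ is an integer; this is the only ``divisibility'' subtlety and it requires no separate argument.
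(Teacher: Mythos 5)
Your proposal is correct and follows essentially the same route as the paper: the paper also double-counts pairs by summing, over all $N$ elements, the number of $s$-separated $k$-sets containing that element (via Corollary \ref{corolariocualquierfijo}), notes each set is counted $k$ times, and divides by $k$. Your remark on where the hypothesis $n_i\geq sk+1$ is actually used matches the paper's use of it exactly.
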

\begin{proof} 
As $n_i\geq sk+1$, Theorem \ref{teorema1fijovarioscirc} ensures that the number of $s$-separated $k$-sets containing a 
fixed elements from the $i$-th circle is $\binom{N-sk-1}{k-1}$. Then, adding over all elements these numbers yields
\[
N\mathcal{A}^{s,k}_{(1,1)}(n_1,\dots,n_p)=N\binom{N-sk-1}{k-1}.\]
Notice that in doing this we counted each $s$-separated $k$-set $k$ times, once per element in said set.
Thus
\[
N\mathcal{A}^{s,k}_{(1,1)}(n_1,\dots,n_p) = k |[n_1,\dots,n_p]_k^s|,\]
and applying Corollary \ref{corolariocualquierfijo}
\[
 |[n_1,\dots,n_p]_k^s| = \frac{N}{k}\binom{N-sk-1}{k-1}. \] 
 \end{proof}

Using that any $s$-separated
$k$-set with $j$ elements in one circle  and $k-j$ elements in another is the union of 
an $s$-separated $j$-set in one circle with an $s$-separated $k-j$-set in the other we get the
following:

\[
|[n_1,n_2]^s_k|=\sum_{j=0}^k |[n_1]^s_j|\times |[n_2]^s_{k-j}|.
\]
We can now follow the ideas from Corollary \ref{corocombin1} and apply Theorem \ref{principal} 
to get
\begin{corollary}
\[
\frac{n_1+n_2}{k}\binom{n_1+n_2-sk-1}{k-1}=\sum_{j=0}^k\frac{n_1}{n_1-sj}\binom{n_1-sj}{j}
\frac{n_2}{n_2-s(k-j)}\binom{n_2-s(k-j)}{k-j}\]
\end{corollary}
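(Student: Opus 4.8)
The plan is to derive this identity as a direct consequence of Theorem~\ref{principal} applied to the two-circle case, combined with the decomposition of an $s$-separated $k$-set according to how its elements split between the two circles. First I would observe that in $[n_1,n_2]^s_k$, every $s$-separated $k$-set $A$ partitions uniquely as $A = (A_1 \times \{1\}) \cup (A_2 \times \{2\})$ where $A_1 \in [n_1]^s_j$ and $A_2 \in [n_2]^s_{k-j}$ for some $0 \le j \le k$; crucially, the $s$-separation condition only constrains pairs of elements within the same circle, so $A$ is $s$-separated if and only if both $A_1$ and $A_2$ are, and there is no interaction between the two pieces. This gives the stated product-sum decomposition $|[n_1,n_2]^s_k| = \sum_{j=0}^k |[n_1]^s_j| \cdot |[n_2]^s_{k-j}|$.

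Next I would evaluate each side. For the left-hand side, since $n_1 \ge sk+1$ and $n_2 \ge sk+1$, Theorem~\ref{principal} (with $p=2$, $N = n_1+n_2$) gives $|[n_1,n_2]^s_k| = \frac{n_1+n_2}{k}\binom{n_1+n_2-sk-1}{k-1}$. For the right-hand side, I would invoke Theorem~\ref{teoremamansour} on each factor: $|[n_1]^s_j| = \frac{n_1}{n_1-sj}\binom{n_1-sj}{j}$ and $|[n_2]^s_{k-j}| = \frac{n_2}{n_2-s(k-j)}\binom{n_2-s(k-j)}{k-j}$. Since $n_1 \ge sk+1 \ge sj+1$ for all $j \le k$, and similarly $n_2 \ge s(k-j)+1$, these formulas apply to every term in the sum (one should note that for $j=0$ the first factor is $\frac{n_1}{n_1}\binom{n_1}{0}=1$, and similarly at $j=k$, so the extreme terms behave correctly). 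Substituting both evaluations into the decomposition yields exactly the claimed identity.

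The main thing to be careful about — rather than a genuine obstacle — is the boundary behavior of Theorem~\ref{teoremamansour} when one of the circle sizes in a summand would drop below the hypothesis threshold. Here this does not happen: the uniform assumption $n_i \ge sk+1$ guarantees $n_i - sj \ge s(k-j)+1 \ge 1$ throughout, so every binomial coefficient and every denominator is well-defined and positive, and no term needs separate treatment. Thus the corollary is essentially a substitution, and no further calculation is required beyond plugging the closed forms into the combinatorial decomposition.
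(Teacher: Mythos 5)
Your proposal is correct and follows essentially the same route as the paper: decompose $[n_1,n_2]^s_k$ by the number $j$ of elements in the first circle (the separation condition decouples across circles), evaluate the left side by Theorem~\ref{principal} with $p=2$, and evaluate each summand by Theorem~\ref{teoremamansour}. Your extra care with the boundary terms and the hypothesis $n_i\geq sk+1$ only makes explicit what the paper leaves implicit.
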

 
\section{Acknowledgements}
This work was partially supported by the Universidad Nacional de San Luis , grant PROIPRO 03-2216, 
and MATH AmSud, grant 18-MATH-01. 
The first author was supported by a doctoral scholarship from 
Consejo Nacional de Investigaciones Cient\'{i}ficas y 
T\'{e}cnicas (CONICET).


\section{References}

\end{document}